\documentclass[11pt]{article}

\usepackage[T1]{fontenc}
\usepackage{lmodern}
\usepackage{amsmath,amssymb,amsthm,mathtools}
\usepackage{microtype}
\usepackage[hidelinks]{hyperref}
\usepackage{enumitem}
\usepackage[a4paper,margin=30mm]{geometry}

\newtheorem{theorem}{Theorem}[section]
\newtheorem{proposition}[theorem]{Proposition}
\newtheorem{lemma}[theorem]{Lemma}

\theoremstyle{definition}
\newtheorem{definition}[theorem]{Definition}
\newtheorem{example}[theorem]{Example}
\theoremstyle{remark}
\newtheorem{remark}[theorem]{Remark}

\title{Sharp $p$-adic Extrema and Least Extremal Rows for Restricted Binomial GCDs}
\author{John Fairfax-Ball}
\date{}

\begin{document}
\maketitle

\begin{abstract}
For integers $m\ge 2$ and rows $N>m$ divisible by $m$, consider the restricted binomial greatest common divisor
\[
  G(N;m)=\gcd\left\{\binom Nk:0<k<N,\ m\mid k\right\}.
\]
Fix a prime $p$ with $p\nmid m$, and let $r_p(m)$ be the least positive integer $r$ such that $m<p^r$. We prove that the largest possible value of $v_p(G(N;m))$, as $N$ ranges over all admissible rows, is exactly $r_p(m)$, and we give a constructive equality row. Attainment is established before the least extremal row $T_p(m)$ is defined. For the special family $m=p^a+1$ with $a\ge2$, we determine that least row exactly:
\[
  T_p(p^a+1)=p^{3a}+1.
\]
The proof combines Kummer's carry theorem with an explicit leading-digit witness for the universal upper bound, a multiplicative-order construction for equality, and a separate strict-minimality argument below $p^{3a}+1$ with a fallback witness for the unique worst leading-digit pattern. The selected-GCD family itself is known in the literature; the relation to earlier results and the limits of the documented literature search are stated explicitly.
\end{abstract}

\section{Introduction}

Divisibility patterns in Pascal's triangle are naturally encoded by $p$-adic valuations of binomial coefficients. Kummer's theorem translates these valuations into carry counts in base $p$, and this viewpoint has long been used to study greatest common divisors of sets of binomial coefficients. We consider a fixed divisibility restriction on the lower index. For $m\ge2$ and an admissible row $N>m$ with $m\mid N$, set
\[
  G(N;m)=\gcd\left\{\binom Nk:0<k<N,\ m\mid k\right\}.
\]
Writing $N=mn$, this is exactly Wu's family
\[
  g(m,n)=\gcd\left\{\binom{mn}{mk}:1\le k<n\right\};
\]
thus the family itself is prior art \cite{Wu2026}. McTague studied the same multiple-index restriction earlier and, among other things, determined the prime valuation under the congruence condition $p\equiv1\pmod m$ \cite{McTague2017}.

The question addressed here is extremal in the row variable. For a prime $p$ not dividing $m$, define
\[
  r_p(m)=\min\{r\ge1:m<p^r\}.
\]
Our first theorem determines the sharp uniform upper bound for $v_p(G(N;m))$ over all admissible rows and proves that the bound is attained. The equality construction is explicit in terms of the leading base-$p$ digit of $m$ and a suitable exponent in the multiplicative order of $p$ modulo $m$.

Having first proved that equality rows exist, we may define the least extremal row $T_p(m)$. The second main theorem determines $T_p(m)$ for the family $m=p^a+1$, $a\ge2$. If
\[
  Q=p^a,\qquad q_0=Q^2-Q+1,
\]
then
\[
  (Q+1)q_0=Q^3+1,
\]
and we prove that $q_0$ is the least multiplier attaining the maximum $a+1$. The proof has two genuinely different components: exact attainment at $Q^3+1$, and a strict witness construction at every smaller multiplier. In the latter argument, the ordinary leading-digit split works except in one explicit carry pattern; there a fixed fallback multiplier $p^{a-1}$ supplies the required strict bound.

The paper is organized as follows. Section~\ref{sec:setup} fixes notation and records the residue form of Kummer's theorem. Sections~\ref{sec:upper} and \ref{sec:attainA} prove the universal bound and constructive attainment for the first theorem. Section~\ref{sec:Tdef} defines the least extremal row only after nonemptiness has been established. Sections~\ref{sec:Battain} and \ref{sec:Bminimal} prove attainment and strict minimality for $m=p^a+1$. Section~\ref{sec:examples} gives examples, including $T_2(5)=65$. Section~\ref{sec:related} describes prior art and the documented literature-search caveat. Section~\ref{sec:formal} records the independent Lean formalization and public Palomar registration.

\section{Notation and Kummer's theorem}\label{sec:setup}

Throughout, $p$ denotes a prime. For a nonzero positive integer $n$, $v_p(n)$ is the exponent of $p$ in $n$. All GCDs below are GCDs of nonempty finite sets of positive integers.

\begin{definition}
Let $m\ge2$. A row $N$ is \emph{admissible for $m$} if
\[
  N>m\qquad\text{and}\qquad m\mid N.
\]
For such a row define
\[
  G(N;m)=\gcd\left\{\binom Nk:0<k<N,\ m\mid k\right\}.
\]
For a prime $p$ put
\[
  r_p(m)=\min\{r\ge1:m<p^r\}.
\]
\end{definition}

If $p\nmid m$, then $r_p(m)=\lceil\log_p m\rceil$, but the integer-power definition avoids any dependence on real logarithms. Since $p$ is prime and $m\ge2$, the selected set in an admissible row is nonempty: if $N=mq$ with $q\ge2$, then $k=m$ is selected.

The valuation of a GCD is the minimum of the valuations of its entries, so
\begin{equation}\label{eq:gcdmin}
  v_p(G(N;m))
  =\min_{\substack{0<k<N\\m\mid k}}v_p\binom Nk.
\end{equation}
The fundamental input is Kummer's theorem \cite{Kummer1852,Granville1997}.

\begin{lemma}[Kummer, residue/borrow form]\label{lem:kummer-residue}
For $0\le k\le N$,
\[
  v_p\binom Nk
  =\#\left\{i\ge1:(k\bmod p^i)>(N\bmod p^i)\right\}.
\]
Equivalently, if $N=x+y$ and $k=x$, then $v_p\binom Nk$ is the number of carries in the base-$p$ addition $x+y$.
\end{lemma}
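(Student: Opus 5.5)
The statement to prove is Lemma~\ref{lem:kummer-residue}, Kummer's theorem in residue/borrow form. I would derive it from Legendre's formula rather than cite it as a black box. First I would write
\[
  v_p(n!)=\sum_{i\ge1}\left\lfloor n/p^i\right\rfloor .
\]
For $0\le k\le N$, this gives
\[
  v_p\binom Nk=\sum_{i\ge1}\left(\left\lfloor \tfrac{N}{p^i}\right\rfloor-\left\lfloor \tfrac{k}{p^i}\right\rfloor-\left\lfloor \tfrac{N-k}{p^i}\right\rfloor\right).
\]
Only finitely many terms are nonzero, since all three floors vanish once $p^i>N$.

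The key step is to identify each summand. Write $a=N\bmod p^i$ and $b=k\bmod p^i$. Then $(N-k)\bmod p^i$ equals $a-b$ if $b\le a$, and equals $a-b+p^i$ if $b>a$. Substituting $n=p^i\lfloor n/p^i\rfloor+(n\bmod p^i)$ for each of $N$, $k$ and $N-k$ and using $N=k+(N-k)$, the $i$-th summand becomes
\[
  \frac{b+\bigl((N-k)\bmod p^i\bigr)-a}{p^i}.
\]
This is $0$ when $b\le a$ and $1$ when $b>a$. So the summand is exactly the indicator of $(k\bmod p^i)>(N\bmod p^i)$, and summing over $i$ gives the displayed formula.

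For the carry formulation, put $x=k$ and $y=N-k$. A carry out of position $i-1$ in the base-$p$ addition $x+y$ occurs exactly when $(x\bmod p^i)+(y\bmod p^i)\ge p^i$. By the same residue identity this is equivalent to $(x\bmod p^i)>(N\bmod p^i)$. Hence the carries are in bijection with the counted indices $i$.

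I expect no real obstacle here. The only point needing care is the case analysis for $(N-k)\bmod p^i$, together with noting that the sum is finite.
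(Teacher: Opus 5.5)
Your proof is correct, but it runs in the opposite direction from the paper's.

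The paper cites Kummer's theorem as a black box: the number of borrows in $N-k$, equivalently the number of carries in $k+(N-k)$, equals $v_p\binom Nk$. Its only work is the translation that a borrow across the $p^i$ boundary happens exactly when $k\bmod p^i>N\bmod p^i$.

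You instead prove the residue formula directly from Legendre's formula. You show that the $i$-th Legendre summand $\lfloor N/p^i\rfloor-\lfloor k/p^i\rfloor-\lfloor (N-k)/p^i\rfloor$ is exactly the indicator of $(k\bmod p^i)>(N\bmod p^i)$. You then get the carry version as a corollary from the same fact, namely $(k\bmod p^i)+((N-k)\bmod p^i)\in\{a,a+p^i\}$.

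What each buys:
\begin{itemize}
\item Your route is self-contained apart from Legendre's formula and in effect reproves Kummer's theorem. It also exhibits the level-by-level $0/1$ structure that the later sections use when they count borrow levels $i$.
\item The paper's route is shorter, but the valuation identity rests entirely on the citation.
\end{itemize}

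Both arguments assume without detailed proof the standard fact that the carry (or borrow) at position $i$ is detected by comparing truncations modulo $p^i$.
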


\begin{proof}
Subtract $k$ from $N$ in base $p$. A borrow is present across the $p^i$ boundary exactly when the integer represented by the lower $i$ digits of $k$ is larger than that represented by the lower $i$ digits of $N$, namely when $k\bmod p^i>N\bmod p^i$. Kummer's theorem identifies the number of borrows in the subtraction with the number of carries in $k+(N-k)$ and with $v_p\binom Nk$.
\end{proof}

\section{The sharp universal upper bound}\label{sec:upper}

We first prove the half of the main extremal theorem that holds row by row.

\begin{proposition}[Universal upper bound]\label{prop:upper}
Let $p$ be prime and let $m\ge2$ with $p\nmid m$. Put $r=r_p(m)$. Then every admissible row $N$ satisfies
\[
  v_p(G(N;m))\le r.
\]
\end{proposition}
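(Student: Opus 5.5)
The plan is to use \eqref{eq:gcdmin}: it suffices to exhibit, for each admissible row $N=mq$, a single selected index $k$ (so $0<k<N$ and $m\mid k$) with $v_p\binom Nk\le r$. By Lemma~\ref{lem:kummer-residue}, this means controlling the number of $i\ge1$ with $k\bmod p^i>N\bmod p^i$, or equivalently the number of carries in the base-$p$ addition $k+(N-k)$. The only tool for producing multiples of $m$ is that $p^r>m$. Hence every block of $p^r$ consecutive integers, and in particular every block of the form $[Cp^r,(C+1)p^r)$, contains a multiple of $m$. The strategy is therefore to choose a block whose high-order digits are forced to produce no borrows, so that all borrows occur in the $r$ low positions $i=1,\dots,r$.

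Concretely, write $N=Bp^r+u$ with $0\le u<p^r$; note $B\ge1$ since $N>m$ forces $N\ge p^{r-1}+\dots$. In fact I will use $N\ge 2m$ and $p^r\le pm$ to place $N$ relative to $p^r$, treating separately the degenerate case $N<p^r$, where every $k<N$ trivially has $v_p\binom Nk\le r$ because $i\le r$ is then forced. In the main case, I take $C$ with $C\le B$ digitwise in base $p$ and $C<B$ as integers, and pick $k\in[Cp^r,(C+1)p^r)$ with $m\mid k$. For $i>r$, the digits of $k$ in positions $r,\dots,i-1$ are those of $C$, which are digitwise at most those of $B$. So $k\bmod p^i>N\bmod p^i$ can only occur if these digits agree exactly, and the comparison is then decided by the lowest $r$ digits. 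I will arrange strict inequality at the lowest position where $C$ and $B$ differ, so that there are no borrows at any $i$ exceeding that position. Also $(C+1)p^r\le Bp^r\le N$, so $k<N$, and $k>0$ can be ensured by taking $C\ge1$, or by picking the nonzero multiple when $C=0$.

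The main obstacle is the case where the base-$p$ digits of $N$ in positions $r,r+1,\dots,j-1$ are all zero, with $j$ the first position $\ge r$ carrying a nonzero digit. Here the naive choice $C=B-1$ has trailing digits $p-1$, and a window aligned at $p^j$ would reintroduce up to $j-r$ extra borrows whenever the low residue $t=k\bmod p^r$ exceeds $u$. My first attempt will use the leading base-$p$ digit of $N$, as the abstract suggests. I will take $C$ to agree with $B$ except that one nonzero digit is lowered (the leading one, or the digit at position $j$), with zeros at positions $r,\dots,j-1$. I will then analyse the carries in $k+(N-k)$ directly: the low block contributes at most $r$ carries, and the lowered digit stops any chain from propagating further. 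If a chain through the zero block cannot be excluded this way, I will use the symmetry $k\leftrightarrow N-k$ (both are multiples of $m$) to swap which summand supplies the low residue, so that the condition $t\le u$ holds on one side. This case analysis is where I expect the real work to lie.
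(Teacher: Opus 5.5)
Your low-window scheme works only when the base-$p$ digit of $N$ at position $r$ is nonzero. In that case $C=B-1$ differs from $B$ already at position $r$, so no borrow occurs above level $r$.

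In the case you flag as the obstacle, neither proposed repair works. Suppose the digits of $N$ at positions $r,\dots,j-1$ vanish. Then every $C$ digitwise below $B$ agrees with $B$ there. So whenever $t=k\bmod p^r>u$, you get a borrow at every level $r,r+1,\dots,j$, on top of those below $r$. Nothing places a multiple of $m$ in $[Cp^r,Cp^r+u]$, and that interval may be shorter than $m$.

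The symmetry $k\leftrightarrow N-k$ is vacuous, since $\binom Nk=\binom N{N-k}$. Concretely, $t>u$ forces $(N-k)\bmod p^r=u-t+p^r>u$ as well.

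More case analysis will not fix this. Take $p=2$, $m=3$ (so $r=2$) and $N=33=100001_2$. Then $B=8=1000_2$ and $u=1$, and the only admissible $C$ is $0$. Your only candidate is $k=3$ (or $30$ by symmetry), and $v_2\binom{33}{3}=4>2$. The window family itself contains no good witness.

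The missing idea is to let the carries sit at the top of $N$ rather than in its lowest $r$ digits. You do this by making the complementary summand equal to $m$ times a digit times a power of $p$.

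The paper splits the multiplier at its leading digit, $q=N/m=\lambda p^t+b$, and takes $k=mb$. Then $N-k=\lambda mp^t$ vanishes below position $t$, so no carry occurs there. After the shift the addition is $\lfloor mb/p^t\rfloor+\lambda m<pm<p^{r+1}$, which gives at most $r$ carries. When $b=0$ it uses $k=mp^t$ or $k=mp^{t-1}$ instead.

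In the example this yields $k=9$, whose two carries leave positions $3$ and $4$. Note that $\lfloor 9/4\rfloor=2$ is not digitwise below $B=8$, which is exactly why your framework cannot see it.

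A single-case variant of the same idea also works. For $N\ge p^r$, choose $s$ with $p^{r+s}\le N<p^{r+s+1}$ and take $k=N-mp^s$. Carries can then only leave positions $s,\dots,s+r-1$.
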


\begin{proof}
Write $N=mq$ with $q\ge2$. Split $q$ at its leading base-$p$ digit:
\[
  q=\lambda p^t+b,
  \qquad 1\le\lambda\le p-1,
  \qquad 0\le b<p^t.
\]
We construct one selected lower index whose binomial valuation is at most $r$.

Suppose first that $b>0$. Take $k=mb$ and write
\[
  mb=u+p^tX,
  \qquad 0\le u<p^t,
  \qquad X=\left\lfloor\frac{mb}{p^t}\right\rfloor.
\]
Since $b<p^t$, we have $0\le X<m<p^r$. The complementary summand is
\[
  N-k=m\lambda p^t.
\]
Below digit $t$ this summand is zero, so no carry occurs below $t$ and no carry enters digit $t$. After discarding the common shift by $p^t$, the remaining carry pattern is exactly that of
\[
  X+\lambda m.
\]
Moreover
\[
  X+\lambda m<m+(p-1)m=pm<p^{r+1}.
\]
Thus no carry can leave digit $r$, and there are at most $r$ carries. Kummer gives
\[
  v_p\binom{mq}{mb}\le r.
\]

Now suppose $b=0$. If $\lambda\ge2$, take $k=mp^t$. Removing the common factor $p^t$ from the addition reduces the carry count to that of
\[
  m+(\lambda-1)m=\lambda m<pm<p^{r+1},
\]
so again there are at most $r$ carries.

Finally, if $b=0$ and $\lambda=1$, then $q=p^t$ and $q\ge2$ implies $t\ge1$. Take $k=mp^{t-1}$. After a shift by $p^{t-1}$ the relevant addition is
\[
  m+(p-1)m=pm<p^{r+1},
\]
which again has at most $r$ carries.

In each case we have exhibited an admissible $k$ with $v_p\binom Nk\le r$. Equation~\eqref{eq:gcdmin} now yields $v_p(G(N;m))\le r$.
\end{proof}

The witness depends on the leading-digit decomposition of the row multiplier $q=N/m$. This dependence is essential: a fixed choice such as $k=m$ need not give the sharp row-independent bound.

\section{Constructive attainment}\label{sec:attainA}

We now prove that the upper bound is sharp. The two cases $r_p(m)=1$ and $r_p(m)\ge2$ require different constructions.

\begin{proposition}[Attainment when $r_p(m)=1$]\label{prop:r1}
Let $p$ be prime, $m\ge2$, $p\nmid m$, and $r_p(m)=1$. Then
\[
  v_p(G(mp;m))=1.
\]
\end{proposition}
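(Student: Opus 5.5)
The upper bound $v_p(G(mp;m))\le 1$ is immediate from Proposition~\ref{prop:upper}: the row $N=mp$ is admissible, since $p\ge2$ gives $N>m$ and $m\mid N$, and $r_p(m)=1$. So the whole task is the lower bound. By \eqref{eq:gcdmin} it suffices to show that $v_p\binom{mp}{k}\ge1$ for every selected index $k$.

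The selected indices are $k=mj$ with $1\le j\le p-1$. I will use the residue form of Kummer's theorem, Lemma~\ref{lem:kummer-residue}, with $i=1$. This requires showing
\[
  (mj\bmod p)>(mp\bmod p)=0,
\]
that is, $p\nmid mj$. This holds because $p\nmid m$ by hypothesis, and $p\nmid j$ since $1\le j\le p-1$ and $p$ is prime. Hence the index $i=1$ is counted, and $v_p\binom{mp}{mj}\ge1$ for every selected $k$.

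Taking the minimum over $k$ gives $v_p(G(mp;m))\ge1$. Together with the upper bound, this yields equality. Note that the hypothesis $r_p(m)=1$, i.e.\ $m<p$, is used only through Proposition~\ref{prop:upper}; the lower bound uses only $p\nmid m$. There is no real obstacle here. The one point requiring care is that the selected range excludes $j=0$ and $j=p$, which is exactly what guarantees $p\nmid j$.
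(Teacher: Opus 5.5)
Your proof is correct and is essentially the paper's argument. Both rest on $p\nmid mj$ for $1\le j\le p-1$: the paper phrases this as a carry in the units column of $mj+m(p-j)=mp$, you phrase it as a borrow at level $i=1$ via Lemma~\ref{lem:kummer-residue}, and both take the upper bound from Proposition~\ref{prop:upper}.
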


\begin{proof}
The condition $r_p(m)=1$ means $m<p$. Every selected lower index in row $mp$ is $k=md$ with $1\le d\le p-1$. Since neither $m$ nor $d$ is divisible by $p$, the units digit of $md$ in base $p$ is nonzero. The same holds for $m(p-d)$, and
\[
  md+m(p-d)=mp
\]
is divisible by $p$. Hence the two units digits sum to $p$, producing a carry in the units column. Kummer therefore gives
\[
  v_p\binom{mp}{md}\ge1
\]
for every selected $d$. Thus $v_p(G(mp;m))\ge1$. Proposition~\ref{prop:upper} gives the reverse inequality.
\end{proof}

\begin{proposition}[Attainment when $r_p(m)\ge2$]\label{prop:rge2}
Let $p$ be prime, $m\ge2$, $p\nmid m$, and put $r=r_p(m)\ge2$. Write
\[
  P=p^{r-1},\qquad m=A P+c,
\]
where
\[
  1\le A\le p-1,\qquad 1\le c<P.
\]
Let $h=\operatorname{ord}_m(p)$, and choose $L$ such that
\[
  L\equiv r-1\pmod h,
  \qquad L\ge2r-2,
  \qquad L>r-1.
\]
Then the row
\[
  N=A p^L+c
\]
is admissible and satisfies
\[
  v_p(G(N;m))=r.
\]
\end{proposition}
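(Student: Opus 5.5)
The plan is to check admissibility directly, take the upper bound $v_p(G(N;m))\le r$ from Proposition~\ref{prop:upper}, and prove the matching lower bound: every selected $k=mj$ with $0<k<N$ satisfies $v_p\binom Nk\ge r$. By \eqref{eq:gcdmin} this gives equality.

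\emph{Admissibility.} Since $p\nmid m$, the order $h=\operatorname{ord}_m(p)$ is defined. The congruence $L\equiv r-1\pmod h$ gives $p^L\equiv p^{r-1}\pmod m$. Hence
\[
N\equiv Ap^{r-1}+c=m\equiv0\pmod m.
\]
Since $L>r-1$, we also have $N>Ap^{r-1}+c=m$.

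\emph{Lower bound via Lemma~\ref{lem:kummer-residue}.} First read off the base-$p$ shape of $N$. It consists of $c$ in the low $r-1$ digits (as $c<P$), then zeros in digits $r-1,\dots,L-1$, then $A$ in digit $L$. Hence $N\bmod p^i=c$ for every $r-1\le i\le L$. Now fix a selected $k$. The function $i\mapsto k\bmod p^i$ is nondecreasing in $i$. So the set of $i\in[r-1,L]$ with $k\bmod p^i>c$ is a terminal segment $\{i_0,\dots,L\}$, possibly empty. Each such $i$ contributes a borrow, so it suffices to show $i_0\le L-r+1$; that gives at least $L-i_0+1\ge r$ borrows. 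The hypothesis $L\ge2r-2$ gives $L-r+1\ge r-1$, so $L-r+1$ lies in the range $[r-1,L]$.

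\emph{Contradiction step (the main point).} Suppose instead that $k\bmod p^{L-r+1}\le c$. Write
\[
k=s\,p^{L-r+1}+u,\qquad 0\le u\le c .
\]
Two facts drive the argument.
\begin{enumerate}[label=(\roman*)]
\item Since $L-r+1\equiv0\pmod h$, we have $p^{L-r+1}\equiv1\pmod m$. So $m\mid k$ forces $s+u\equiv0\pmod m$.
\item Since $k<N=Ap^{L}+c$ and $c<P\le p^{L-r+1}$ (using $L-r+1\ge r-1$ again), we get $s\le AP$.
\end{enumerate}
From (ii), $0<s+u\le AP+c=m$, where positivity holds because $k>0$. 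Combined with (i), this forces $s+u=m$, hence $s=AP$ and $u=c$. But then $k=Ap^L+c=N$, contradicting $k<N$.

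Therefore every selected $k$ has at least $r$ borrows in the positions $[r-1,L]$, so $v_p\binom Nk\ge r$ for all of them. This gives $v_p(G(N;m))\ge r$, and with Proposition~\ref{prop:upper} we obtain equality.

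The delicate step is the contradiction step. It is where both conditions on $L$ are used: the congruence $L\equiv r-1\pmod h$ makes $p^{L-r+1}$ act as $1$ modulo $m$, and the inequality $L\ge 2r-2$ keeps $c$ below $p^{L-r+1}$, which gives the size bound $s\le AP$.
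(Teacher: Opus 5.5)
Your proof is correct, and it organizes the lower bound differently from the paper. The paper assumes at most $r-1$ carries and first forces a carry across the $p^L$ boundary, using $p^L\equiv p^{r-1}\pmod m$ and the pinch $0\le A_1P+u\le m$. It then takes the maximal final run of carries ending at $L$, of length $\ell\le r-1$, and repeats the pinch at the non-carry level $s=L-\ell$ with the shifted congruence $p^{s}\equiv p^{r-1-\ell}\pmod m$. So its test level depends on the carry pattern, and it needs a family of congruences indexed by $\ell$.

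You instead use two facts: $N\bmod p^i=c$ is constant for $r-1\le i\le L$, and $k\bmod p^i$ is nondecreasing in $i$. Hence the borrow levels in that window form a terminal segment; equivalently, carries propagate through the zero block of $N$. This reduces everything to the single fixed level $L-r+1$. There $p^{L-r+1}\equiv1\pmod m$ gives the cleanest possible pinch $0<s+u\le m$, and the separate ``carry at $p^L$'' step disappears. The strict inequality $k<N$ does the work of the paper's $y>0$.

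The underlying mechanism is the same in both: a quantity divisible by $m$ is trapped in $[0,m]$, and $L\ge 2r-2$ keeps the test level inside the window where $N$ has residue $c$. Your version is shorter at the cost of the small monotonicity observation, which the paper avoids by testing each candidate level directly.
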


\begin{proof}
Minimality of $r$ gives
\[
  P<m<pP.
\]
Because $p\nmid m$, the remainder $c$ in the displayed decomposition is nonzero, and $p$ has a multiplicative order modulo $m$. The choice of $L$ gives
\[
  p^L\equiv p^{r-1}=P\pmod m.
\]
Consequently
\[
  N=A p^L+c\equiv AP+c=m\equiv0\pmod m.
\]
Also $L>r-1$ implies $N>m$, so $N$ is admissible.

It remains to show that every selected binomial coefficient in this row has valuation at least $r$. Let
\[
  N=x+y,
  \qquad x>0,\ y>0,
  \qquad m\mid x,\ m\mid y.
\]
Suppose, toward a contradiction, that the base-$p$ addition $x+y$ has at most $r-1$ carries.

There must be a carry across the $p^L$ boundary. Otherwise write
\[
  x=A_1p^L+u,
  \qquad y=B_1p^L+v,
  \qquad 0\le u,v<p^L.
\]
No carry across that boundary gives
\[
  u+v=c,\qquad A_1+B_1=A.
\]
Since $m\mid x$ and $p^L\equiv P\pmod m$,
\[
  A_1P+u\equiv0\pmod m.
\]
But
\[
  0\le A_1P+u\le AP+c=m.
\]
Hence $A_1P+u$ is either $0$ or $m$. In the first case $x=0$; in the second, $A_1=A$ and $u=c$, forcing $y=0$. Both are impossible.

Therefore a carry crosses the $p^L$ boundary. Consider the final consecutive chain of carries ending there, and let its length be $\ell$. Our contradictory hypothesis gives
\[
  1\le\ell\le r-1.
\]
Set $s=L-\ell$. By maximality of this final chain, there is no carry across the $p^s$ boundary. Moreover
\[
  s\ge L-r+1\ge r-1,
\]
so $c<p^{r-1}\le p^s$. Write
\[
  x=A_2p^s+u,
  \qquad y=B_2p^s+v,
  \qquad 0\le u,v<p^s.
\]
The absence of a carry across $p^s$ gives
\[
  u+v=c,
  \qquad A_2+B_2=A p^\ell.
\]
Since $s+\ell=L$ and $p^L\equiv p^{r-1}\pmod m$, cancellation of the invertible factor $p^\ell$ modulo $m$ gives
\[
  p^s\equiv p^{r-1-\ell}\pmod m.
\]
Put $D=p^{r-1-\ell}$. From $m\mid x$ we obtain
\[
  A_2D+u\equiv0\pmod m.
\]
On the other hand,
\[
  0\le A_2D+u
  \le A p^\ell p^{r-1-\ell}+c
  =AP+c=m.
\]
Thus $A_2D+u$ is $0$ or $m$, and exactly as above either $x=0$ or $y=0$. This contradiction shows that every such split has at least $r$ carries.

Applying the argument to $x=k$ and $y=N-k$ for each selected $k$, Kummer yields
\[
  v_p\binom Nk\ge r.
\]
Hence $v_p(G(N;m))\ge r$, while Proposition~\ref{prop:upper} supplies the opposite inequality.
\end{proof}

Combining the upper bound and the two attainment cases gives the main maximum theorem.

\begin{theorem}[Sharp maximum]\label{thm:A}
Let $p$ be prime and let $m\ge2$ satisfy $p\nmid m$. Then
\[
  \max_{\substack{N>m\\m\mid N}}v_p(G(N;m))=r_p(m).
\]
Moreover, an equality row exists constructively: one may take $N=mp$ when $r_p(m)=1$, and when $r_p(m)\ge2$ one may take a row $N=A p^L+c$ as in Proposition~\ref{prop:rge2}.
\end{theorem}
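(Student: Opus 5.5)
Theorem~\ref{thm:A} is a direct assembly of the three preceding propositions, so the proof will be a short combination argument. The plan is to show that the set of values $v_p(G(N;m))$, over admissible rows $N$, is bounded above by $r=r_p(m)$ and contains $r$. It then follows that the maximum exists and equals $r$.

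\textbf{Upper bound.} Every admissible row $N$ (with $N>m$ and $m\mid N$) satisfies $v_p(G(N;m))\le r$ by Proposition~\ref{prop:upper}. Its only hypotheses are that $p$ is prime, $m\ge2$ and $p\nmid m$, which are exactly those of the theorem. Hence the supremum is at most $r$. Since valuations are nonnegative integers, the set of values is a finite subset of $\{0,\dots,r\}$. A maximum therefore exists once we know the set is nonempty, and nonemptiness comes from the attainment step.

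\textbf{Attainment.} I will split according to whether $r=1$ or $r\ge2$. If $r=1$, then $mp$ is admissible, since $mp>m$ and $m\mid mp$. Proposition~\ref{prop:r1} gives $v_p(G(mp;m))=1=r$. If $r\ge2$, I must check that the data in Proposition~\ref{prop:rge2} actually exist.
\begin{itemize}[nosep]
\item From $p^{r-1}\le m<p^r$ and $p\nmid m$ we get $P<m<pP$. Dividing $m$ by $P$ gives a quotient $A$ with $1\le A\le p-1$ and a remainder $c$ with $0\le c<P$.
\item The remainder is nonzero: $c=0$ would force $p\mid m$, because $r\ge2$ means $P$ is a positive power of $p$.
\item The order $h=\operatorname{ord}_m(p)$ exists because $\gcd(p,m)=1$.
\item A valid $L$ exists: take $L=r-1+jh$ with $j\ge1$ large enough that $L\ge 2r-2$. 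Then $L\equiv r-1\pmod h$ and $L>r-1$.
\end{itemize}
Proposition~\ref{prop:rge2} then yields an admissible row $N=Ap^L+c$ with $v_p(G(N;m))=r$.

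Combining the two parts, $r$ is attained and is an upper bound, so the maximum equals $r_p(m)$, and the constructive rows are exactly those named in the statement. I expect no real obstacle. The only point needing care is the existence bookkeeping for $A$, $c$ and $L$ when $r\ge2$: specifically the fact $c\ne0$ and the choice of $j$, both of which I have recorded above.
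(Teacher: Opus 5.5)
Your proposal is correct and matches the paper's argument: Theorem~\ref{thm:A} is obtained by combining the upper bound of Proposition~\ref{prop:upper} with the attainment results of Propositions~\ref{prop:r1} and~\ref{prop:rge2}. Your explicit check that $A$, $c\neq0$, $h$ and a suitable $L=r-1+jh$ actually exist is a small but welcome addition, since the paper notes $c\neq0$ and the existence of $h$ only inside the proof of Proposition~\ref{prop:rge2} and takes the choice of $L$ for granted.
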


\section{The least extremal row}\label{sec:Tdef}

Theorem~\ref{thm:A} contains an existence proof, so the following definition is now legitimate.

\begin{definition}\label{def:T}
For a prime $p$ and $m\ge2$ with $p\nmid m$, let
\[
  \mathcal E_{p,m}
  =\left\{N>m:m\mid N,\ v_p(G(N;m))=r_p(m)\right\}.
\]
By Theorem~\ref{thm:A}, $\mathcal E_{p,m}$ is nonempty. Define
\[
  T_p(m)=\min\mathcal E_{p,m}.
\]
\end{definition}

The order here is intentional: the least extremal row is not invoked until an attaining row has been constructed.

\section[Attainment for p-power plus one]{Attainment for $m=p^a+1$}\label{sec:Battain}

Fix a prime $p$ and an integer $a\ge2$. Put
\[
  Q=p^a,
  \qquad m=Q+1.
\]
Then
\[
  p^a<m<p^{a+1},
\]
so $r_p(m)=a+1$. Define
\[
  q_0=Q^2-Q+1,
  \qquad N_0=mq_0=Q^3+1=p^{3a}+1.
\]
The identity $q_0=\Phi_6(Q)$ is elementary and will only be used as a convenient description of the multiplier.

\begin{proposition}[Exact target-row valuation]\label{prop:Battain}
For every prime $p$ and every $a\ge2$,
\[
  v_p\bigl(G(p^{3a}+1;p^a+1)\bigr)=a+1.
\]
\end{proposition}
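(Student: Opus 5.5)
The upper bound $v_p(G(Q^3+1;m))\le a+1$ is immediate from Proposition~\ref{prop:upper}, since $r_p(m)=a+1$ and $N_0=Q^3+1$ is admissible: $m\mid Q^3+1$ and $N_0>m$. So the whole task is the lower bound. We must show that every split $N_0=x+y$ with $x,y>0$ and $m\mid x$, $m\mid y$ has at least $a+1$ carries in base $p$; Lemma~\ref{lem:kummer-residue} then finishes.

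I would mimic the proof of Proposition~\ref{prop:rge2}. In the notation there, $m=AP+c$ with $P=p^a$, $A=1$, $c=1$. Also $p^{3a}\equiv -1\pmod m$ rather than $\equiv P$, so I cannot quote that proposition directly, but the same carry-chain argument adapts. The base-$p$ expansion of $N_0$ is a $1$ at position $3a$, a $1$ at position $0$, and zeros in between.

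Suppose a split has at most $a$ carries.

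\textbf{Step 1: a carry crosses the $p^{3a}$ boundary.} If it did not, then the top parts satisfy $A_1+B_1=1$, so one of $x,y$, say $x$, is $<p^{3a}$ and the other is $\ge p^{3a}$. Since there is no carry across that boundary, the low parts satisfy $u+v=1$, so $x\in\{0,1\}$. Neither is a positive multiple of $m$, a contradiction.

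\textbf{Step 2: isolate the final chain.} Let the final consecutive carry chain ending at the boundary $3a$ have length $\ell$ with $1\le\ell\le a$, and set $s=3a-\ell\ge 2a$. By maximality of the chain there is no carry across $p^s$. Hence $x=A_2p^s+u$ and $y=B_2p^s+v$ with $u+v=1$ and $A_2+B_2=p^\ell$.

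\textbf{Step 3: the congruence.} Since $p^s\cdot p^\ell\equiv -1\pmod m$, we get $p^s\equiv -p^{-\ell}\pmod m$. Here $p^{a}\equiv -1$, so $p^{-\ell}\equiv -p^{a-\ell}$, and therefore $p^s\equiv p^{a-\ell}\pmod m$. Put $D=p^{a-\ell}$. The condition $m\mid x$ gives $A_2D+u\equiv0\pmod m$, with
\[
0\le A_2D+u\le p^\ell p^{a-\ell}+1=m.
\]
So $A_2D+u\in\{0,m\}$. In the first case $x=0$. In the second, $A_2=p^\ell$ and $u=1$, which forces $B_2=0$, $v=0$, and hence $y=0$. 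Either way we contradict positivity.

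The main obstacle is the modular bookkeeping in Step~3, specifically checking that $D=p^{a-\ell}$ is a genuine integer power. This needs $\ell\le a$, which is exactly the contradiction hypothesis. The other point to verify is that the bound $A_2D+u\le m$ holds with the tight constant $m$, which is what makes the dichotomy $\{0,m\}$ work. I would also double-check that $s\ge 2a$ leaves room: $u<p^s$ imposes no problem because the low part of $N_0$ is just $1$.
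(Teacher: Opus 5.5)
Your proof is correct, but it takes a different route from the paper's. The paper does not use carry chains here. It uses the residue form of Lemma~\ref{lem:kummer-residue} together with the fact that $N_0\bmod p^i=1$ for every $1\le i\le 3a$. So a selected index $k=md$ has exactly $3a-v_p(k)$ borrows when $p\mid k$, and exactly $3a-v_p(k-1)$ borrows otherwise. The first valuation is at most $2a-1$ because $d<p^{2a}$. The second is at most $2a-1$ because $m^{-1}\equiv 1-Q\equiv q_0\pmod{p^{2a}}$ while $1\le d<q_0$. Equality then comes from the explicit witness $d_0=p^{2a-1}$ rather than from Proposition~\ref{prop:upper}. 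That computation gives more information: the exact valuation of every selected coefficient and a concrete minimizing index.

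Your argument is structural instead: it reruns the carry-chain and congruence-dichotomy mechanism of Proposition~\ref{prop:rge2}. Your remark that you cannot quote that proposition directly is mistaken. With $P=p^a$ you have $P\equiv-1\pmod m$, hence $p^{3a}\equiv-1\equiv P\pmod m$. It follows that $h=\operatorname{ord}_m(p)$ divides $2a$, and $L=3a$ satisfies $L\equiv r-1\pmod h$, $L\ge 2r-2=2a$ and $L>r-1=a$. Taking $A=c=1$, the row $Ap^L+c$ is exactly $p^{3a}+1$. So the statement is a literal special case of Proposition~\ref{prop:rge2}; in fact $h=2a$, so $3a$ is the smallest exponent that proposition permits for this $m$. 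Your Steps~1--3 re-derive that proposition in this instance, and you could simply cite it.
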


\begin{proof}
Let $1\le d<q_0$ and set $k=md$. Since
\[
  N_0=p^{3a}+1,
\]
we have
\[
  N_0\bmod p^i=1
  \qquad(1\le i\le3a).
\]
For $i\ge3a+1$ one has $p^i>N_0$, so there are no borrow levels beyond $3a$.

Suppose first that $p\mid k$. Since $p\nmid m$, put
\[
  t=v_p(k)=v_p(d)\ge1.
\]
For $i\le t$, $k\bmod p^i=0$, while for $i>t$ the residue $k\bmod p^i$ is a nonzero multiple of $p^t$, hence is greater than $1$. Lemma~\ref{lem:kummer-residue} therefore gives exactly $3a-t$ borrows among the levels $1,\dots,3a$. Since
\[
  d<q_0<Q^2=p^{2a},
\]
we have $t\le2a-1$, and hence $3a-t\ge a+1$.

Now suppose $p\nmid k$, and let
\[
  s=v_p(k-1).
\]
For $i\le s$, $k\bmod p^i=1$. For $i>s$, the residue is nonzero and is not $1$, hence is greater than $1$. Thus the borrow count is $3a-s$. We claim that $s\le2a-1$. If $s\ge2a$, then
\[
  md=k\equiv1\pmod{Q^2}.
\]
But
\[
  (1+Q)(1-Q)=1-Q^2\equiv1\pmod{Q^2},
\]
so
\[
  m^{-1}\equiv1-Q\equiv Q^2-Q+1=q_0\pmod{Q^2}.
\]
It follows that $d\equiv q_0\pmod{Q^2}$, impossible because $1\le d<q_0<Q^2$. Hence $s\le2a-1$, and again $3a-s\ge a+1$.

Thus every selected coefficient has valuation at least $a+1$. To obtain equality, take
\[
  d_0=p^{2a-1}.
\]
For $a\ge2$ one has $d_0<q_0$, so this is an admissible selected multiplier. Then
\[
  k_0=md_0=p^{3a-1}+p^{2a-1}
\]
has $v_p(k_0)=2a-1$. By the first case, the borrow levels are precisely
\[
  2a,2a+1,\dots,3a,
\]
a total of $a+1$. Therefore one selected coefficient has valuation exactly $a+1$, and the GCD has valuation $a+1$.
\end{proof}

\section{Strict non-attainment below the target row}\label{sec:Bminimal}

To prove minimality, let
\[
  2\le q<q_0
\]
and consider the admissible row $N=mq$. We will construct a multiplier $d$ with $1\le d<q$ such that
\begin{equation}\label{eq:strictgoal}
  v_p\binom{mq}{md}\le a.
\end{equation}
By \eqref{eq:gcdmin}, this proves that the row is not extremal.

Write the leading base-$p$ decomposition
\[
  q=\lambda p^t+b,
  \qquad1\le\lambda\le p-1,
  \qquad0\le b<p^t.
\]

\subsection{Pure leading term}

Assume first that $b=0$.

If $\lambda\ge2$, choose $d=p^t$. After removing the common shift $p^t$, the carries in
\[
  md+m(q-d)
\]
are the carries in
\[
  m+(\lambda-1)m=\lambda m.
\]
Since
\[
  \lambda m\le(p-1)(Q+1)<pQ=p^{a+1},
\]
there are at most $a$ carries, proving \eqref{eq:strictgoal}.

If $\lambda=1$, then $q=p^t$ and $t\ge1$. Choose $d=p^{t-1}$. After shifting, the addition is
\[
  m+(p-1)m=pm.
\]
The base-$p$ expansion of $m=Q+1$ has a $1$ in positions $0$ and $a$ and zeroes elsewhere. Hence this addition produces exactly two carries, one from position $0$ and one from position $a$. Since $a\ge2$, the valuation is at most $a$.

\subsection{Nonzero suffix and the exceptional leading pattern}

Now suppose $b>0$. We first try the leading-split witness $d=b$. Put
\[
  X=\left\lfloor\frac{mb}{p^t}\right\rfloor.
\]
Because $b<p^t$,
\[
  0\le X<m=Q+1,
\]
so $0\le X\le Q$. As in the proof of Proposition~\ref{prop:upper}, the carry count is the carry count in
\[
  X+\lambda m.
\]
This count is at most $a+1$. If it is at most $a$, then \eqref{eq:strictgoal} is proved.

Suppose instead that there are $a+1$ carries. Then all possible carry positions $0,1,\dots,a$ occur. Since
\[
  \lambda m=\lambda Q+\lambda,
\]
this forces
\begin{equation}\label{eq:exceptionalpattern}
  \lambda=p-1,
  \qquad Q-p+1\le X\le Q-1.
\end{equation}
Indeed, carrying through positions $0,\dots,a-1$ forces the intermediate digits of $X$ to be $p-1$ and its units digit to be nonzero, while the carry out of position $a$ forces $\lambda=p-1$.

Set
\[
  h=p^t-b.
\]
Then, using \eqref{eq:exceptionalpattern},
\[
  q=p^{t+1}-h
\]
and
\[
  X=Q+1-\left\lceil\frac{mh}{p^t}\right\rceil.
\]
The exceptional range is equivalent to
\[
  2\le\left\lceil\frac{mh}{p^t}\right\rceil\le p,
\]
which, because $m$ is coprime to $p$, yields
\begin{equation}\label{eq:mhwindow}
  p^t<mh<p^{t+1}.
\end{equation}
The upper inequality implies $t\ge a$: if $t\le a-1$, then $p^{t+1}\le Q<m\le mh$, a contradiction. Since $q<q_0<Q^2$, also $t\le2a-1$. The endpoint $t=2a-1$ cannot occur: from $q<q_0$ one would obtain $h\ge Q$, and hence $mh>Q^2=p^{t+1}$, contradicting \eqref{eq:mhwindow}. Therefore
\begin{equation}\label{eq:trange}
  a\le t\le2a-2.
\end{equation}
Moreover \eqref{eq:mhwindow} gives
\[
  h<\frac{p^{t+1}}m<p^{t-a+1}\le p^{a-1}.
\]

\subsection{The fallback witness}

Put
\[
  P=p^{a-1}
\]
and choose the fixed fallback multiplier
\[
  d=P.
\]
Because $q>p^t\ge Q>P$, this satisfies $1\le d<q$.

The selected lower index is
\[
  K=mP=P+p^{2a-1}.
\]
Using $q=p^{t+1}-h$, write
\[
  N=mq=p^{t+a+1}+R,
  \qquad R=p^{t+1}-mh>0,
\]
and put $L=t+a+1$.

We show first that no borrow occurs at any level $i<2a$. If $i\le a-1$, then
\[
  K\bmod p^i=0.
\]
If $a\le i\le2a-1$, then
\[
  K\bmod p^i=P,
\]
so it remains to prove $N\bmod p^i>P$.

Suppose $i\le t+1$. Then
\[
  R\bmod p^i=(-mh)\bmod p^i.
\]
Since $h<P$, the summands $h$ and $Qh$ in $mh=h+Qh$ occupy disjoint base-$p$ digit blocks. Thus
\[
  mh\bmod p^i
  =h+Q\bigl(h\bmod p^{i-a}\bigr)
\]
and consequently
\begin{align*}
  mh\bmod p^i
  &\le(P-1)+Q(p^{i-a}-1)\\
  &=p^i-(Q-P+1)\\
  &\le p^i-P-1.
\end{align*}
Hence
\[
  R\bmod p^i\ge P+1.
\]

Now suppose $i>t+1$. Then $R<p^{t+1}<p^i$, so $N\bmod p^i=R$. By \eqref{eq:trange},
\[
  s=t+1-a
\]
satisfies $1\le s\le a-1$. Modulo $m=Q+1$,
\[
  p^{t+1}=Qp^s\equiv-p^s\pmod m.
\]
Therefore the least positive residue of $p^{t+1}$ modulo $m$ is
\[
  m-p^s=Q+1-p^s\ge Q+1-P\ge P+1.
\]
Since $R>0$ and $R\equiv p^{t+1}\pmod m$, it follows that $R\ge P+1$.

We have proved that no borrow occurs below level $2a$. At the other end, if $i\ge L+1$, then $p^i>N$, so no borrow occurs there either. All borrows are therefore confined to
\[
  2a\le i\le L.
\]
The number of such levels is at most
\[
  L-2a+1=t-a+2\le a
\]
by \eqref{eq:trange}. Kummer gives \eqref{eq:strictgoal}.

We have now covered every $q$ below $q_0$.

\begin{proposition}[Strict lower-row non-attainment]\label{prop:lowernon}
Let $p$ be prime and $a\ge2$. If $N$ is admissible for $p^a+1$ and
\[
  N<p^{3a}+1,
\]
then
\[
  v_p\bigl(G(N;p^a+1)\bigr)\le a.
\]
\end{proposition}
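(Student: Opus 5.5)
The plan is to reduce Proposition~\ref{prop:lowernon} to the case analysis already carried out in Section~\ref{sec:Bminimal}. Write $m=p^a+1=Q+1$ and $N=mq$. Admissibility gives $N>m$ and $m\mid N$, so $q\ge2$. The inequality $N<p^{3a}+1=mq_0$ then gives $q<q_0$. By \eqref{eq:gcdmin}, it suffices to exhibit one multiplier $d$ with $1\le d<q$ satisfying \eqref{eq:strictgoal}. Then $k=md$ is a selected index with $0<k<N$ and $v_p\binom Nk\le a$, hence $v_p(G(N;m))\le a$.

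To produce $d$, I would split $q=\lambda p^t+b$ at its leading base-$p$ digit, exactly as in Section~\ref{sec:Bminimal}.

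\emph{Case $b=0$.} Take $d=p^t$ if $\lambda\ge2$, and $d=p^{t-1}$ if $\lambda=1$ (here $t\ge1$ because $q\ge2$). The carry counts reduce to those of $\lambda m<p^{a+1}$, giving at most $a$ carries, and of $m+(p-1)m$, giving exactly two carries. Both are at most $a$ since $a\ge2$.

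\emph{Case $b>0$, ordinary pattern.} Take $d=b$. Kummer's theorem (Lemma~\ref{lem:kummer-residue}) reduces the count to the carries in $X+\lambda m$ with $X\le Q$. This is at most $a+1$, and it is at most $a$ unless the exceptional pattern \eqref{eq:exceptionalpattern} occurs.

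\emph{Case $b>0$, exceptional pattern.} Here I would use the consequences already derived: the window \eqref{eq:mhwindow}, the range \eqref{eq:trange}, and $h<p^{a-1}$. Then take the fallback $d=p^{a-1}$, which satisfies $d<q$ because $q>p^t\ge Q$.

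The main obstacle is the fallback case: showing that no borrow occurs at any level $i<2a$, that is, $N\bmod p^i>p^{a-1}$ for $a\le i\le 2a-1$. I would handle this as the text does. For $i\le t+1$, the key point is that $mh=h+Qh$ splits into disjoint digit blocks, which bounds $mh\bmod p^i$ away from $p^i$. For $i>t+1$, use that $R\equiv p^{t+1}\equiv -p^{t+1-a}\pmod m$, so $R\ge Q+1-p^{a-1}$.

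Once this is in place, the borrows are confined to levels $2a,\dots,t+a+1$. By \eqref{eq:trange} there are at most $t-a+2\le a$ such levels. This completes every case, and \eqref{eq:gcdmin} yields the proposition.
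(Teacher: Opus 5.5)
Your proposal is correct and follows the paper's proof essentially step for step: you reduce to $2\le q<q_0$ through $p^{3a}+1=(p^a+1)q_0$, use the leading-digit witnesses $d=p^t$, $d=p^{t-1}$ and $d=b$, and in the exceptional pattern fall back to $d=p^{a-1}$, confining the borrows to levels $2a,\dots,t+a+1$. One small step is left implicit in your $i>t+1$ case: to conclude there is no borrow you also need $Q+1-p^{a-1}\ge p^{a-1}+1$, which holds because $Q=p\cdot p^{a-1}\ge 2p^{a-1}$.
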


\begin{proof}
Write $N=(p^a+1)q$. Admissibility gives $q\ge2$, and the factorization
\[
  p^{3a}+1=(p^a+1)(p^{2a}-p^a+1)
\]
shows that $N<p^{3a}+1$ is equivalent to $q<q_0$. The construction above produces $1\le d<q$ satisfying \eqref{eq:strictgoal}, so the selected-GCD valuation is at most $a$.
\end{proof}

Combining Propositions~\ref{prop:Battain} and \ref{prop:lowernon} gives the least-row formula.

\begin{theorem}[Least extremal row for $p^a+1$]\label{thm:B}
For every prime $p$ and every integer $a\ge2$,
\[
  \boxed{T_p(p^a+1)=p^{3a}+1.}
\]
Equivalently, the least multiplier $q\ge2$ such that
\[
  v_p(G((p^a+1)q;p^a+1))=a+1
\]
is
\[
  q=p^{2a}-p^a+1=\Phi_6(p^a).
\]
\end{theorem}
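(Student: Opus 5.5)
The theorem is a direct assembly of results already proved, so the plan is to combine them through Definition~\ref{def:T}. First I record that $m=p^a+1$ is coprime to $p$, since $m\equiv1\pmod p$. I also note that $p^a<m<p^{a+1}$, which gives $r_p(m)=a+1$. Hence $T_p(m)$ is well defined by Theorem~\ref{thm:A}, and it is the least admissible row $N$ with $v_p(G(N;m))=a+1$.

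Next I show that $p^{3a}+1\in\mathcal E_{p,m}$. It is admissible because $p^{3a}+1=(p^a+1)(p^{2a}-p^a+1)$ with second factor $q_0\ge2$, so $N_0>m$ and $m\mid N_0$. Proposition~\ref{prop:Battain} gives valuation exactly $a+1=r_p(m)$. Therefore $T_p(m)\le p^{3a}+1$.

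For the reverse inequality, take any admissible $N<p^{3a}+1$. Proposition~\ref{prop:lowernon} gives $v_p(G(N;m))\le a<r_p(m)$, so $N\notin\mathcal E_{p,m}$. Hence the minimum is exactly $p^{3a}+1$.

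The equivalent multiplier form follows by writing $N=(p^a+1)q$. Admissibility is equivalent to $q\ge2$, and $N\le p^{3a}+1$ corresponds to $q\le q_0=\Phi_6(p^a)$. The least extremal $q$ is then $N_0/m=q_0$.

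There is no real obstacle at this stage. The substantive work is the strict witness construction of Section~\ref{sec:Bminimal}, in particular the fallback case. The only point to check here is that Proposition~\ref{prop:lowernon} covers every admissible $N$ below the target, including small $q$ such as $q=2$. That proposition's case analysis ranges over all $2\le q<q_0$, so this holds.
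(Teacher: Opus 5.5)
Your proposal is correct and matches the paper's argument: Theorem~\ref{thm:B} is obtained by combining Proposition~\ref{prop:Battain} (valuation exactly $a+1=r_p(p^a+1)$ at the admissible row $p^{3a}+1$) with Proposition~\ref{prop:lowernon} (valuation at most $a$ at every admissible row below it), read through Definition~\ref{def:T}. Your preliminary checks that $p\nmid p^a+1$, that $r_p(p^a+1)=a+1$, and that $q_0\ge2$ are the same ones the paper records at the start of Section~\ref{sec:Battain}.
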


\section{Examples and boundary cases}\label{sec:examples}

\begin{example}[$T_2(5)=65$]
Take $p=2$ and $a=2$. Then $m=2^2+1=5$ and $r_2(5)=3$. Theorem~\ref{thm:B} gives
\[
  T_2(5)=2^6+1=65.
\]
The corresponding least multiplier is
\[
  q_0=2^4-2^2+1=13,
\]
so $65=5\cdot13$. At the target row, the equality witness from Proposition~\ref{prop:Battain} uses
\[
  d_0=2^{2a-1}=8,
  \qquad k_0=5\cdot8=40,
\]
and $v_2\binom{65}{40}=3$.
\end{example}

\begin{example}[An $r_p(m)=1$ equality row]
Let $p=5$ and $m=3$. Then $r_5(3)=1$, and Proposition~\ref{prop:r1} gives
\[
  v_5(G(15;3))=1.
\]
The argument does not require the congruence $5\equiv1\pmod3$.
\end{example}

\begin{remark}[Why $a\ge2$]
The proof of Theorem~\ref{thm:B} uses $a\ge2$ essentially. In the pure-power multiplier branch, one witness can have exactly two carries, which is bounded by $a$ only from $a=2$ onward. The $a=1$ regime is therefore not included in the theorem.
\end{remark}

\begin{remark}[The known $(2,3,6)$ instance]
McTague explicitly records
\[
  v_2(G(6;3))=2
\]
in the same multiple-index family \cite{McTague2017}. Since $6$ is the least admissible row for $m=3$, this already yields $T_2(3)=6$. This instance is prior art and is not included in Theorem~\ref{thm:B}.
\end{remark}

\section{Related work and literature position}\label{sec:related}

Ram's classical full-row GCD theorem initiated a long line of questions about common divisors of binomial coefficients \cite{Ram1909}; Kummer's theorem supplies the standard carry interpretation of their prime valuations \cite{Kummer1852}. Granville gives a modern treatment of arithmetic properties of binomial coefficients modulo prime powers \cite{Granville1997}.

For the multiple-index restriction relevant here, McTague proved a theorem for
\[
  \gcd_{0<k<n/q}\binom n{qk}
\]
when the prime satisfies $p\equiv1\pmod q$ \cite{McTague2017}. In the notation of this paper, that result contains the entire subcase $p\equiv1\pmod m$ of Theorem~\ref{thm:A}: then $p>m$, so $r_p(m)=1$, and McTague's criterion gives the corresponding $0/1$ valuation behavior and an equality row. McTague also gives the explicit $(p,m,N)=(2,3,6)$ valuation-$2$ example discussed above.

Wu subsequently defined
\[
  g(m,n)=\gcd\left\{\binom{mn}{mk}:1\le k<n\right\},
\]
which is exactly $G(mn;m)$, and proved further results for this family, including the case in which the restriction parameter itself is a power of the valuation prime \cite{Wu2026}. Thus neither the selected-GCD family nor the use of Kummer carries is being introduced here.

Several other papers study GCDs under different restrictions on the lower indices. Hong treats indices coprime to a fixed parameter \cite{Hong2016}; Chiu, Yuan and Zhou study central-band GCDs separated by coprimality with the row \cite{ChiuYuanZhou2023}; and Chung, Yang and Zhou study exact GCD classes $\gcd(n,k)=d$ \cite{ChungYangZhou2025}. These restrictions are related in theme but are not the same as the fixed divisibility condition $m\mid k$ used here.

A particularly recent adjacent paper is Chung and Yang's 2026 work on binomial coefficients with non-coprime indices \cite{ChungYang2026}. During the documented literature audit for the present work, the publisher abstract and metadata were inspectable, but the full theorem text was not openly available. The advertised selection rule is different: it ranges over lower indices non-coprime to the upper index, whereas the present paper fixes a divisor $m$ and selects the indices divisible by $m$. The complete theorem-level non-overlap with material inaccessible in that paper could therefore not be verified.

The documented literature search located no equivalent statement of the general maximum in Theorem~\ref{thm:A} outside the known overlaps just described, and no equivalent statement of Theorem~\ref{thm:B}, including its least multiplier and strict lower-row non-attainment. This is a negative search result only; it is not a claim of historical uniqueness.

\section{Formal verification}\label{sec:formal}

The mathematical proofs above stand independently of the accompanying formal development. They have also been formalized in Lean~4 against Mathlib. The principal theorem layer is contained in the \texttt{PascalExtremes} directory: \texttt{TargetA.lean} proves the universal bound; \texttt{TargetAAttainment.lean} proves constructive attainment and least-row well-definedness; \texttt{TargetB.lean} proves target-row attainment; and \texttt{TargetBMinimality.lean} proves strict lower-row non-attainment and the least-row theorem. In particular, the formal theorem \texttt{targetA\_attainment} precedes the use of the least extremal row, \texttt{targetA} states the exact maximum as an \texttt{IsGreatest} assertion, and \texttt{targetB} states
\[
  T\,p\,(p^a+1)=p^{3a}+1
\]
for every prime $p$ and $a\ge2$.

A Mathlib-only statement surface and its solution bridge were registered publicly with Palomar as \textbf{PALOMAR-2026-09-30-000033}, version~1; the \href{https://palomar-registry.org/entry.html?id=PALOMAR-2026-09-30-000033\&version=1}{public registry entry} records that version. The registration records the verified Lean theorem package and its protected statement surface. It is verification and provenance evidence; it is not evidence of historical priority, novelty, or uniqueness.

\section{Concluding remarks}

The sharp maximum in Theorem~\ref{thm:A} has a simple numerical value, but its attainment is not supplied by a uniform row shape such as $mp^r$. The successful construction instead repeats the leading base-$p$ block of $m$ at a sufficiently distant exponent constrained by the multiplicative order of $p$ modulo $m$. For $m=p^a+1$, the extremal problem is much more rigid: Theorem~\ref{thm:B} identifies not merely an equality row but the least one, and the proof of minimality requires recognizing and repairing the unique leading-digit pattern in which the first witness loses the desired strictness.

\bibliographystyle{alpha}
{\small
\bibliography{references}
}

\end{document}